\title{Bounding the number of hyperedges in friendship $r$-hypergraphs} 
\author{Karen Gunderson\thanks{Heilbronn Institute for Mathematical Research, Department of Mathematics, University of Bristol, U.K. Email: karen.gunderson@bristol.ac.uk} \hspace{.7cm}
Natasha Morrison\thanks{Mathematical Institute, University of Oxford, Woodstock Road, Oxford OX2 6GG, UK. \\Email: morrison@maths.ox.ac.uk} \hspace{.7cm}
Jason Semeraro\thanks{Heilbronn Institute for Mathematical Research, Department of Mathematics, University of Bristol, U.K. Email: js13525@bristol.ac.uk}}
\newtheorem{thm}{Theorem}
\newtheorem{lem}[thm]{Lemma}
\newtheorem{prop}[thm]{Proposition}
\newtheorem{cor}[thm]{Corollary}
\newtheorem{ques}[thm]{Question}
\newtheorem*{thm-main1}{Theorem \ref{thm:main}}
\theoremstyle{definition}
\newcommand{\calH}{\mathcal{H}}
\newcommand{\R}{\mathcal{R}}
\begin{document}

\maketitle

\begin{abstract}
For $r \ge 2$, an $r$-uniform hypergraph is called a \emph{friendship $r$-hypergraph} if every set $R$ of $r$ vertices has a unique `friend' -- that is, there exists a unique vertex $x \notin R$ with the property that for each subset $A \subseteq R$ of size $r-1$, the set $A \cup \{x\}$ is a hyperedge. 

 We show that for $r \geq 3$, the number of hyperedges in a friendship $r$-hypergraph is at least $\frac{r+1}{r} \binom{n-1}{r-1}$, and we characterise those hypergraphs which achieve this bound. This generalises a result given by Li and van Rees in the case when $r = 3$.
 
 We also obtain a new upper bound on the number of hyperedges in a friendship $r$-hypergraph, which improves on a known bound given by Li, van Rees, Seo and Singhi when $r=3$.
\end{abstract}

\section{Introduction}\label{sec:intro}

The Friendship Theorem, first proved by Erd\H{o}s, R\'{e}nyi, and S\'{o}s \cite{ERV66}, states that if $G$ is a graph with the property that every pair of vertices have a unique common neighbour, then $G$ consists of triangles all sharing a single common vertex.  That is, $G$ has a vertex $v$ adjacent to all others and the graph induced by $V(G)\backslash \{v\}$ is a matching.  Such a graph is sometimes called a `windmill graph'.  Subsequently, many different proofs of the Friendship Theorem have been given \cite{cH02, LP72, hW71}.

The Friendship Theorem was first generalised to hypergraphs by S\'{o}s \cite{vS76} who used certain combinatorial designs to construct $3$-uniform hypergraphs with the property that for every $3$ vertices, there is a unique `friend'; a vertex contained in hyperedges with all pairs among the $3$ vertices.  Such hypergraphs are called \emph{friendship $3$-hypergraphs} or \emph{$3$-uniform friendship hypergraphs}.

For any $r \geq 3$, an $r$-uniform hypergraph $\mathcal{H}$ and a set of vertices $A$, say that a vertex $u \notin A$ is a \emph{friend of $A$ in $\mathcal{H}$} if and only if for every set $B \subseteq A$ with $|B| = r-1$, there is a hyperedge $B \cup \{u\} \in \mathcal{H}$.  An $r$-uniform hypergraph $\mathcal{H}$ is called a \emph{friendship $r$-hypergraph} (or \emph{$r$-uniform friendship hypergraph}) if and only if every set of $r$ vertices has a unique friend in $\mathcal{H}$.  J{\o}rgensen and Sillasen \cite{JS14} first introduced the study of friendship $r$-hypergraphs for general $r$, which is the focus of this paper. 

One can naturally obtain an analogue of the windmill graphs for all $r \geq 2$; these graphs can be constructed from Steiner systems, as was noted by S\'{o}s \cite{vS76} in the case $r=3$ and for arbitrary $r$ by J{\o}rgensen and Sillasen \cite{JS14}. An $S(t, k, n)$ \emph{Steiner system} is a $k$-uniform hypergraph on $n$ vertices with the property that every collection of $t$ vertices appears together in exactly one hyperedge.  A celebrated result of Keevash \cite{pK14} states that $S(r-1, r, n)$ Steiner systems exist for every $n$ sufficiently large satisfying a set of divisibility conditions (depending on $r$). Note that an $S(r-1, r, n)$ Steiner system is precisely an $r$-uniform hypergraph in which every set of $r-1$ vertices has a unique friend.  These can be used to produce examples of friendship $r$-hypergraphs, as described in the following.

Let $\mathcal{S}$ be an $S(r-1,r,n-1)$ Steiner system and form a new $r$-uniform hypergraph $U_r(\mathcal{S})$ with vertex set $V(\mathcal{S}) \cup \{u\}$, and edge set
\begin{equation*}\label{eq:universal}
E(\mathcal{U}_r(\mathcal{S})) = E(\mathcal{S}) \cup \{ A \cup \{u\} \mid A \subseteq V(\mathcal{S}) \text{ with } |A| = r-1\}.
\end{equation*}

It is easy to see that  $U_r(\mathcal{S})$ is a friendship $r$-hypergraph. A friendship $r$-hypergraph $\calH$ is called \textit{universal} if there exists some Steiner system $\mathcal{S}$ with the property that $\calH \cong  U_r(\mathcal{S})$. Note that as there may be more than one $S(r-1,r,n-1)$ Steiner system, for a fixed $n$, such hypergraphs need not be unique.  If $\mathcal{H} \cong U_r(\mathcal{S})$ with vertex $u \notin V(\mathcal{S})$, then $u$ is called a \emph{universal vertex in $\mathcal{H}$}.  A simple example of a family of universal friendship $r$-hypergraphs on any $r \ge 3$ is given by the complete hypergraphs $K_{r+1}^r$, whose edges consist of all subsets of size $r$ of a set of $r+1$ vertices. 

S\'{o}s \cite[\S 2.5]{vS76} asked, in the case $r = 3$, if one could identify all friendship $3$-hypergraphs.  Hartke and Vandenbussche \cite{HV08} found, by computer search, examples of non-universal friendship $3$-hypergraphs on $8$, $16$, and $32$ vertices and proved that for $n \leq 10$, the only non-universal friendship $3$-hypergraph is the one they gave on $8$ vertices.  Li, van Rees, Seo, and Singhi \cite{LRSS12} gave new upper and lower bounds on the size of friendship $3$-hypergraphs and, using a computer search, showed that there are no friendship $3$-hypergraphs on $11$ or $12$ vertices.  They further studied `geometric' friendship $3$-hypergraphs - those which can be embedded in a Boolean quadruple system - and showed that the three friendship $3$-hypergraphs on $16$ vertices found by Hartke and Vandenbussche \cite{HV08} are the only such friendship $3$-hypergraphs on $16$ vertices.

J{\o}rgensen and Sillasen \cite{JS14} gave the first example of an infinite class of non-universal friendship $3$-hypergraphs.  Their example, which they call \emph{the cubeconstructed hypergraph} is as follows.  For any $k \geq 3$, define a $3$-uniform hypergraph on vertex set $\{0,1\}^k$ with hyperedges consisting of all sets $\{\mathbf{x}, \mathbf{y}, \mathbf{z}\}$ with the property that for each $i \in [k]$, the coordinates $x_i, y_i, z_i$ are neither all $0$ nor all $1$.  Alternatively, these are the triples with the property that $||\mathbf{x} - \mathbf{y}||_1 + ||\mathbf{y} - \mathbf{z}||_1 + ||\mathbf{z} - \mathbf{x}||_1 = 2k$.  Such hypergraphs have $2^k$ vertices and $2^{k-1}(3^{k-1} - 1)$ hyperedges. In the same paper, J{\o}rgensen and Sillasen \cite{JS14} gave an example, constructed using a Steiner system, of a non-universal $4$-uniform friendship hypergraph on $9$ vertices with $90$ hyperedges.

In this paper we are concerned with finding new bounds for the number of hyperedges in a friendship $r$-hypergraph. Several results in this direction are already known in the case $r = 3$. Li and van Rees \cite{LvR13} proved that a friendship $3$-hypergraph on $n$ vertices has at least $\frac{4}{3}\binom{n-1}{2}$ hyperedges and showed that a hypergraph with exactly this many hyperedges is universal.  Our first result generalises this lower bound and characterisation of the extremal examples to all $r\geq 3$.

\begin{thm}\label{thm:main}
Let $r \ge 3$ and let $\mathcal{H}$ be a friendship $r$-hypergraph on $n \ge r+1$ vertices. Then,
$$|E(\mathcal{H})| \ge \frac{r+1}{r}\binom{n-1}{r-1}.$$
Moreover, equality holds if and only if $\calH$ is universal. 
\end{thm}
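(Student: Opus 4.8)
The plan is to translate the edge count into a statement about the degrees of $(r-1)$-sets. For an $(r-1)$-set $B \subseteq V(\mathcal{H})$, write $d(B)$ for the number of hyperedges containing $B$ and $N(B) = \{x \notin B : B \cup \{x\} \in E(\mathcal{H})\}$, so that $|N(B)| = d(B)$. Since every hyperedge has exactly $r$ subsets of size $r-1$, we have $\sum_B d(B) = r|E(\mathcal{H})|$, the sum being over all $\binom{n}{r-1}$ sets $B$. Using $r\binom{n}{r} = n\binom{n-1}{r-1}$, the desired inequality $|E(\mathcal{H})| \ge \frac{r+1}{r}\binom{n-1}{r-1}$ is equivalent to $\sum_B d(B) \ge (r+1)\binom{n-1}{r-1}$, and I would aim to prove it in this form.

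For each $(r-1)$-set $B$ I would exploit the friends of the $r$-sets extending $B$. For $w \in V(\mathcal{H}) \setminus B$, let $\psi_B(w)$ denote the unique friend of $B \cup \{w\}$. Taking the face $B$ of $B \cup \{w\}$ shows $B \cup \{\psi_B(w)\} \in E(\mathcal{H})$, so $\psi_B(w) \in N(B)$; moreover the friend lies outside its $r$-set, so $\psi_B(w) \ne w$. Thus $\psi_B$ is a fixed-point-free map from the $(n-r+1)$-element set $V(\mathcal{H}) \setminus B$ into $N(B)$, whence $d(B) \ge |\mathrm{im}\,\psi_B| = (n-r+1) - c(B)$, where $c(B) := (n-r+1) - |\mathrm{im}\,\psi_B|$ counts the \emph{collisions} of $\psi_B$. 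Summing over $B$ and using $\binom{n}{r-1}(n-r+1) = r\binom{n}{r} = n\binom{n-1}{r-1}$ yields
$$\sum_B d(B) \ge n\binom{n-1}{r-1} - \sum_B c(B),$$
so the whole problem reduces to the upper bound $\sum_B c(B) \le (n-r-1)\binom{n-1}{r-1}$, possibly after accounting for hyperedges not detected by the maps $\psi_B$.

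The crux, and what I expect to be the main obstacle, is that $\sum_B c(B)$ can genuinely be large (indeed it is large already in the universal examples), so it must be offset by forced incidences rather than bounded naively. The key observation I would use is a compensation principle: if $\psi_B$ sends a set $F$ of $g := |F| \ge 2$ vertices $w$ to a common friend $x$, then for every $b \in B$ and every $w \in F$ the set $(B \setminus \{b\}) \cup \{x, w\}$ is a hyperedge, since it is the face $(B \setminus \{b\}) \cup \{w\}$ of $B \cup \{w\}$ together with the friend $x$. Hence each of the $r$ sets $(B \setminus \{b\}) \cup \{x\}$ has degree at least $g$. Thus every collision of $\psi_B$ forces additional incidences at nearby $(r-1)$-sets, and the plan is to charge these forced incidences against the collisions that create them, showing that the gains exactly counterbalance $\sum_B c(B)$ down to the bound $(n-r-1)\binom{n-1}{r-1}$. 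The delicate point is to organise this charging so that no incidence is counted twice and the constant comes out to be exactly $\frac{r+1}{r}$; this accounting is the heart of the argument.

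Finally, I would extract the equality characterisation by tracking when every inequality above is tight. Equality forces each $d(B) = |\mathrm{im}\,\psi_B|$ and pins down the collision structure of every $\psi_B$; I expect this to force the existence of a single vertex $u$ that is the friend of every $r$-set avoiding it, while the hyperedges not through $u$ contain each $(r-1)$-set exactly once, i.e. they form an $S(r-1,r,n-1)$ Steiner system $\mathcal{S}$ on $V(\mathcal{H}) \setminus \{u\}$. This would identify $\mathcal{H}$ with $U_r(\mathcal{S})$, so $\mathcal{H}$ is universal. The reverse implication is the routine direction: for $\mathcal{H} = U_r(\mathcal{S})$ one computes directly that $|E(\mathcal{H})| = \frac{1}{r}\binom{n-1}{r-1} + \binom{n-1}{r-1} = \frac{r+1}{r}\binom{n-1}{r-1}$, so equality holds.
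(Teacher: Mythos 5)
Your reduction is set up correctly and your local observations are sound: $\psi_B(w)\in N(B)$, $\psi_B(w)\neq w$, the identity $\binom{n}{r-1}(n-r+1)=n\binom{n-1}{r-1}$, and the compensation principle all check out (one small slip: there are $r-1$ sets of the form $(B\setminus\{b\})\cup\{x\}$ with $b\in B$, not $r$). One can even verify that in the universal examples $d(B)=|\mathrm{im}\,\psi_B|$ for every $B$ and $\sum_B c(B)=(n-r-1)\binom{n-1}{r-1}$ exactly, so the target inequality you reduce to is tight and plausible. But the proof stops precisely where the real work begins: the charging argument that is supposed to offset collisions by forced incidences is never constructed. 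You give no rule for how a collision at a pair $(B,x)$ is charged, no verification that distinct collisions charge to distinct incidences not already counted in $\sum_B|\mathrm{im}\,\psi_B|$, and no computation showing the constant $\tfrac{r+1}{r}$ emerges. You yourself label this accounting ``the heart of the argument,'' so what you have is a correct reduction plus a conjecture, not a proof. The equality characterisation has the same status: ``I expect this to force the existence of a single vertex $u$'' is an expectation, not an argument, and extracting the universal vertex from tightness of your inequalities is itself a nontrivial step.

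For contrast, the paper resolves exactly this difficulty with two tools absent from your proposal: the $K_{r+1}^r$-decomposition $\mathcal{H}'$ (every hyperedge of a friendship hypergraph lies in a unique $K_{r+1}^r$) and Bollob\'{a}s's saturation theorem. Fixing a vertex $a$ and working in the link $\mathcal{H}(a)$, each $(r-1)$-set $R\notin\mathcal{H}(a)$ is assigned to the unique $q\in\mathcal{H}'$ containing $R\cup\{u\}$, where $u$ is the friend of $R\cup\{a\}$; the forced edges appearing in your compensation principle show that the subgraph of $\mathcal{H}(a)$ induced on the vertices of $q$ is $K_r^{r-1}$-saturated, so at most $r$ sets can be assigned to any single $q$. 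This converts the delicate global charging you would need into a clean per-hyperedge bound, giving $\binom{n-1}{r-1}\le r|E(\mathcal{H}')|$ at once; the uniqueness clause of the saturation theorem (extremal saturated graphs are $M(n,k,\ell)$) then drives the equality case, showing the sociable $(r-1)$-sets are exactly those through one vertex $u$, which is the universal vertex. If you want to salvage your approach, that saturation theorem is the natural instrument for making your charging rigorous.
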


One particular way in that our proof differs from that of Li and van Rees \cite{LvR13} is that we use hypergraph saturation results of Bollob\'{a}s \cite{bB65} in two different ways - first to simplify a step in the proof of the lower bound, and, more centrally, to prove that this bound is uniquely realised by the universal graphs.  The proof of the lower bound is closely related to the proof given in \cite{LvR13} in the case $r = 3$ while our proof that the extremal examples are precisely the universal friendship hypergraphs uses a different technique.

For upper bounds on the size of a friendship $3$-hypergraph, Li, van Rees, Seo and Singhi \cite{LRSS12} showed that if $\mathcal{H}$ is a friendship $3$-hypergraph, then
\begin{equation}\label{eq:LRSS-ub}
|E(\calH)| \leq \binom{n}{3}  \frac{2(n-3)}{3n-10} = \frac{n^3}{9}(1+o(1)).
\end{equation}

We give an improved upper bound on the number of hyperedges in a friendship $r$-hypergraph that holds for all $r \geq 3$.

\begin{thm}\label{thm:main2}
Let $r \geq 3$ and let $\mathcal{H}$ be a friendship $r$-hypergraph on $n \geq r+1$ vertices.  Then,
\[
|E(\calH)| \leq \frac{2}{r(r+1)}\left\lfloor \frac{(r+1)(3r-4)}{6}\right\rfloor \binom{n}{r} + \frac{4}{r^2(r+1)}\left\lceil \frac{2(r+1)}{3} \right\rceil \binom{n}{r-1}.
\]
Moreover, if equality holds, then $\calH$ has the property that every set of $r-1$ vertices is contained in the same number of hyperedges.
\end{thm}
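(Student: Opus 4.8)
The plan is to turn the friendship condition into an exact counting identity over $(r+1)$-sets and then read off the bound from a local optimisation whose equality case forces degree-regularity. First I would record two identities. Counting incidences between $(r-1)$-sets and hyperedges gives $\sum_A d(A)=r\lvert E(\calH)\rvert$, where $A$ ranges over $(r-1)$-sets and $d(A)=\lvert\{x:A\cup\{x\}\in E(\calH)\}\rvert$; thus it suffices to bound $\sum_A d(A)$ from above. For the second identity, fix an $(r+1)$-set $S$, write $e(S)$ for the number of hyperedges contained in $S$, and for $v\in S$ call $(S\setminus\{v\},v)$ an internal friend incidence when $v$ is the friend of $S\setminus\{v\}$. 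A one-line check shows that $v$ is the friend of $S\setminus\{v\}$ precisely when every other $r$-subset of $S$ is a hyperedge, so the number $\iota(S)$ of internal friend incidences of $S$ is $r+1$ if $e(S)=r+1$, is $1$ if $e(S)=r$, and is $0$ otherwise. Since every $r$-set has a unique friend $x$, the map sending the incidence $(S\setminus\{x\},x)$ to the pair consisting of that $r$-set and its friend is a bijection, giving the master identity $\sum_{\lvert S\rvert=r+1}\iota(S)=\binom nr$, to be used alongside $\sum_{\lvert S\rvert=r+1}e(S)=(n-r)\lvert E(\calH)\rvert$.

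These identities reduce the theorem to an extremal question: maximise $\sum_S e(S)$ subject to $\sum_S\iota(S)=\binom nr$, where each $(r+1)$-set contributes an admissible pair $(e(S),\iota(S))$ from the list $\{(k,0):0\le k\le r-1\}\cup\{(r,1),(r+1,r+1)\}$ computed above. Solving this integer program, while respecting the feasibility constraint that only $\binom{n}{r+1}$ sets are available and that the three types must fit together globally, is exactly where the constants $\lfloor(r+1)(3r-4)/6\rfloor$ and $\lceil 2(r+1)/3\rceil$ should emerge. To pin the optimisation down I would feed in one more structural consequence of uniqueness: for every $(r-1)$-set $A$, the map $\phi_A$ sending $w\notin A$ to the friend of $A\cup\{w\}$ takes values in $N(A)=\{x:A\cup\{x\}\in E(\calH)\}$ and restricts there to a fixed-point-free involution. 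Indeed, if $y$ is the friend of $A\cup\{x\}$ then the defining conditions $(A\setminus\{a\})\cup\{x,y\}\in E(\calH)$ are symmetric in $x$ and $y$, so by uniqueness $x$ is the friend of $A\cup\{y\}$; in particular each $d(A)$ is even, which is the integrality phenomenon responsible for the floors and ceilings.

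For the inequality I would assemble the estimate from the $(r+1)$-set accounting, using the involution to control how the edge-rich configurations can be packed, and absorbing the lower-order contributions into the $\binom{n}{r-1}$ term. For the ``moreover'' statement I would track the equality conditions backwards: equality forces every $(r+1)$-set to realise one of the extremal admissible types, and I would then show, using the involution structure to transfer information between overlapping $(r-1)$-sets, that this rigidity can only hold when all degrees $d(A)$ agree. Phrasing the final step as a convexity (Jensen) inequality in the $d(A)$, whose equality case is exactly a constant degree sequence, is the cleanest route to the regularity conclusion.

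The main obstacle is the precise local extremal analysis. A crude linear-programming relaxation of the $(e(S),\iota(S))$ trade-off already yields a bound of the stated form but with the cleaner coefficient $\binom r2$ in place of $\lfloor(r+1)(3r-4)/6\rfloor$ and a different lower-order term; recovering the exact stated constants therefore requires a genuinely more careful accounting that simultaneously controls the number of $(r+1)$-sets of each type and uses the evenness of the $d(A)$. Getting this accounting exactly right, and --- harder still --- verifying that the equality conditions propagate to full $(r-1)$-set regularity rather than to some weaker balance condition, is where I expect essentially all of the difficulty to lie.
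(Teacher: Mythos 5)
Your preliminary structure is sound: the identities $\sum_A d(A) = r|E(\calH)|$, $\sum_S e(S) = (n-r)|E(\calH)|$ and $\sum_S \iota(S) = \binom{n}{r}$ are correct, as are the classification of admissible pairs $(e(S),\iota(S))$ and the fixed-point-free involution on $N(A)$ (the latter is equivalent to the evenness of degrees that the paper gets from its $K_{r+1}^r$-decomposition). The genuine gap is that the statement is never actually proved: the crude relaxation you do carry out gives $(n-r)|E(\calH)| \le (r-1)\binom{n}{r+1} + \binom{n}{r}$, i.e.\ leading constant $\binom{r}{2}$ and lower-order term $\binom{n}{r}/(n-r)$, and you defer to ``a genuinely more careful accounting'' exactly the step that produces the constants in the statement. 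Two concrete problems follow. First, your relaxed bound does not imply the theorem: for $r=3$ and $r=4$ one has $\binom{r}{2} = \left\lfloor (r+1)(3r-4)/6\right\rfloor$, so the leading terms coincide, but your lower-order term $\binom{n}{r}/(n-r) = \frac{n-r+1}{r(n-r)}\binom{n}{r-1}$ strictly exceeds the stated one ($\frac{1}{3}\binom{n}{2}$ for $r=3$, $\frac{1}{5}\binom{n}{3}$ for $r=4$), so your inequality is strictly weaker in precisely the small-$r$ cases where the bound is nearly attained (e.g.\ $r=4$, $n=9$). Second, and more fundamentally, the stated constants do not arise from per-$(r+1)$-set statistics at all, so refining the $(e(S),\iota(S))$ trade-off alone cannot be expected to produce them; the information that matters lives on \emph{pairs} of overlapping cliques, which your accounting cannot see.

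For comparison, the paper's route is: pass to the $K_{r+1}^r$-decomposition $\calH'$ (your type-$(r+1,r+1)$ sets) and prove that for a fixed clique $q \in E(\calH')$ and a fixed vertex $z \notin q$, the number of cliques $q' \in E(\calH')$ with $z \in q'$ and $|q'\cap q| = r-1$ is at most $\left\lfloor (r+1)(3r-4)/6\right\rfloor$. This is a graph-theoretic fact: those $q'$ define a graph on the $r+1$ vertices of $q$ with maximum degree at most $r-1$ in which any two vertices of degree $r-1$ are adjacent (both consequences of uniqueness of friends), so its complement has minimum degree at least $1$ with no two degree-$1$ vertices adjacent, hence at least $\left\lceil 2(r+1)/3\right\rceil$ edges; this is where both $\left\lfloor (r+1)(3r-4)/6\right\rfloor = \binom{r+1}{2} - \left\lceil 2(r+1)/3\right\rceil$ and the ceiling in the second term originate. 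The theorem then follows by double counting $\sum_A \deg(A)^2$ with Jensen's inequality, and it is exactly the equality case of Jensen that yields the ``moreover'' clause. In your scheme Jensen never enters the derivation of the inequality, so its equality case cannot be ``read off'' at the end as you hope; your final step is a restatement of the goal rather than an argument. (A curiosity worth noting: since $\binom{r}{2} < \left\lfloor (r+1)(3r-4)/6\right\rfloor$ for $r \ge 5$, your crude bound has a strictly better leading constant in that range, which would make the equality clause vacuous for large $n$; but this does not repair $r=3,4$, which are the cases of real interest here.)
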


In the special case $r = 3$, Theorem \ref{thm:main2} shows that a friendship $3$-hypergraph, $\mathcal{H}$, on $n$ vertices has
\[
|E(\mathcal{H})| \leq \frac{n^2(n-1)}{12} = \frac{n^3}{12}(1+o(1)),
\]
which improves the upper bound given in \eqref{eq:LRSS-ub} by a constant factor.

Furthermore, Theorem \ref{thm:main2} implies that if $\mathcal{H}$ is  a friendship $3$-hypergraph with $|\mathcal{H}| = \frac{n^2(n-1)}{12}$, then every pair of vertices of $\calH$ occurs in the same number of hyperedges.  Li, van Rees, Seo and Singhi \cite{LRSS12} conjectured that there are no friendship $3$-hypergraphs satisfying this condition, except for the trivial case $\mathcal{H} = K_4^3$.  

Using a more careful version of Theorem \ref{thm:main2}, in Section \ref{sec:ex}, we show that the example of a friendship $4$-hypergraph on $9$ vertices with $90$ hyperedges given in \cite{JS14} is as large as possible among friendship $4$-hypergraphs on $9$ vertices.

The remaining sections of the paper are organised as follows.  In Section \ref{sec:lb}, we give the proof of Theorem \ref{thm:main}, by first giving a proof of the lower bound and noting a degree property satisfied by hypergraphs attaining the lower bound.  The second part of the theorem is then proved using this degree property together with bounds for hypergraph saturation. In Section \ref{sec:ub}, we find some structural results and use them to prove Theorem \ref{thm:main2}. We conclude the paper with some related open questions in Section \ref{sec:open-ques}.

\section{Lower bounds}\label{sec:lb}
The section begins with some preliminary results required for the proof of the lower bound; we state two observations of  J{\o}rgensen and Sillasen \cite{JS14} and introduce the necessary hypergraph saturation results of Bollob\'{a}s \cite{bB65}. We then prove our first result in two parts; initially we show the lower bound on the number of hyperedges and then use some key ideas from that argument to conclude that the extremal graphs are universal.

The following lemma will be used in several places throughout our proof.
\begin{lem}[J{\o}rgensen and Sillasen \cite{JS14}]\label{lem:decomp}
Let $\mathcal{H}$ be a friendship $r$-hypergraph. Then
\begin{itemize}
\item[(i)] Every set of at most $r-1$ vertices is contained in at least one hyperedge.
\item[(ii)] Every hyperedge is contained in a unique copy of $K_{r+1}^r$. 
\end{itemize}
\end{lem}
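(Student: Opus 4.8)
The plan is to derive both parts directly from the definitions of \emph{friend} and \emph{friendship $r$-hypergraph}, with the hypothesis $n \ge r+1$ guaranteeing that enough vertices are available at each step. The governing idea is that ``$x$ is a friend of a hyperedge $e$'' is logically equivalent to ``$e \cup \{x\}$ spans a copy of $K_{r+1}^r$'', so both statements become unpackings of the uniqueness-of-friends axiom.

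For part (i), I would first reduce to the case of a set of exactly $r-1$ vertices: any set $S$ with $|S| \le r-1$ can be enlarged (using $n \ge r+1$) to a set $S'$ of size $r-1$, and any hyperedge containing $S'$ also contains $S$. So fix $S$ with $|S| = r-1$ and pick any vertex $v \notin S$, forming an $r$-set $R = S \cup \{v\}$. Let $x$ be the unique friend of $R$; since $|R| = r$ and $n \ge r+1$, such an $x$ exists with $x \notin R$. Taking the $(r-1)$-subset $A = S$ of $R$ in the definition of friend shows that $S \cup \{x\}$ is a hyperedge containing $S$, as required.

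For part (ii), I would treat existence and uniqueness separately. For existence, let $e$ be a hyperedge and, viewing $e$ as an $r$-set, let $x$ be its unique friend. Each of the $r$ sets $(e \setminus \{v\}) \cup \{x\}$, for $v \in e$, is a hyperedge by the definition of friend; together with $e$ itself, these are precisely the $r+1$ subsets of size $r$ of $e \cup \{x\}$, so $e \cup \{x\}$ spans a copy of $K_{r+1}^r$ containing $e$. For uniqueness, suppose $e$ lies in a copy of $K_{r+1}^r$ on vertex set $e \cup \{w\}$ with $w \notin e$. Then every set $(e \setminus \{v\}) \cup \{w\}$ is a hyperedge, so $w$ is a friend of $e$; by the uniqueness of the friend, $w = x$, whence the copy is unique.

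I do not anticipate a serious obstacle, as the argument is essentially a direct translation of the definitions. The one point that warrants care is the bijective matching in part (ii) between the $r+1$ facets of the $(r+1)$-vertex simplex on $e \cup \{x\}$ and the hyperedge $e$ together with the $r$ hyperedges witnessing that $x$ is a friend of $e$; it is exactly this correspondence that makes existence and uniqueness fall out simultaneously.
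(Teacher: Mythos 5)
Your proof is correct: part (i) follows by extending any small set to an $(r-1)$-set, adjoining a vertex, and taking $A=S$ in the definition of friend; part (ii) follows because ``$x$ is the friend of the hyperedge $e$'' is exactly the statement that $e\cup\{x\}$ spans a $K_{r+1}^r$, so existence and uniqueness of the copy are equivalent to existence and uniqueness of the friend. The paper itself states this lemma as a citation to J{\o}rgensen and Sillasen without reproducing a proof, and your argument is the standard direct unpacking of the definitions that this citation stands for.
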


Lemma \ref{lem:decomp}(ii) shows that we may partition the edges of $\calH$ into disjoint sets of size $r+1$, each of which forms a $K_{r+1}^r$. We define the \textit{ $K_{r+1}^r$-decomposition} of $\calH$ to be the $(r+1)$-uniform hypergraph $\calH'$ whose edges consist of all subsets of $r+1$ vertices which induce a $K_{r+1}^r$ in $\calH$.  For much of what follows, the $K_{r+1}^r$-decomposition of any friendship hypergraph shall be primarily used.  Observe that
\begin{equation}\label{ob:edges}
|E(\calH)|=(r+1)|E(\calH')|.
\end{equation}

A key feature of our proof is the application of a hypergraph saturation result of Bollob\'{a}s \cite{bB65}, given in Theorem \ref{thm:sat} below. For any $n$, $k$, $\ell$ with $k+\ell \leq n$, let $M(n, k, \ell)$ be the $k$-uniform hypergraph on vertices $\{1, \ldots, n\}$ with edge set $E$ containing all $k$-sets that have non-empty intersection with the set of $\ell$ vertices $\{1, 2, \ldots, \ell\}$.  The hypergraph $M(n, k, \ell)$ contains no copy of $K_{k + \ell}^k$ but the addition of any $k$-set $e \notin E$  creates a copy of $K_{k+\ell}^k$ containing $e$.  Such a graph is called \emph{$K_{k+\ell}^k$-saturated}.

\begin{thm}[Bollob\'{a}s \cite{bB65}]\label{thm:sat}
Let $\mathcal{H}$ be a $k$-uniform hypergraph on $n$ vertices that is $K_{k+\ell}^k$-saturated.  Then,
\[
|E(\mathcal{H})| \geq |M(n, k, \ell)| = \binom{n}{k} - \binom{n-\ell}{k}. 
\]
Furthermore, if equality holds, then $\mathcal{H} \cong M(n, k, \ell)$.
\end{thm}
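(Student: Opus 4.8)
The plan is to prove the inequality and the characterisation separately: I would reduce the lower bound to a \emph{weak} saturation statement that can be attacked by linear algebra, and then treat the equality case by a combinatorial rigidity argument, which I expect to be the genuinely hard part.

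First I would observe that every $K_{k+\ell}^k$-saturated hypergraph is in fact weakly $K_{k+\ell}^k$-saturated: since $\mathcal{H}$ is $K_{k+\ell}^k$-free and the addition of a single non-edge $e$ already produces a copy of $K_{k+\ell}^k$ through $e$ in $\mathcal{H}+e$, that same copy persists when $e$ is instead added to any supergraph of $\mathcal{H}$. Hence the non-edges can be added in an arbitrary order so that each new edge completes a fresh copy of $K_{k+\ell}^k$, and therefore $|E(\mathcal{H})|$ is at least the weak saturation number of $K_{k+\ell}^k$ in $K_n^k$. To bound this number from below I would use the exterior-algebra method of Lov\'asz, Frankl and Kalai: assign to each vertex a generic vector and to each $k$-set $K$ a vector $w_K$ in a suitable exterior power, arranged so that (i) for every $(k+\ell)$-set $S$ the $\binom{k+\ell}{k}$ vectors $\{w_K : K \in \binom{S}{k}\}$ are linearly dependent with each one lying in the span of the others, and (ii) the whole family $\{w_K\}$ has rank exactly $\binom{n}{k}-\binom{n-\ell}{k}$. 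Property (i) forces the vector of any edge that completes a copy of $K_{k+\ell}^k$ to lie in the span of the vectors of the already-present edges; running this along the weak-saturation order shows that $\{w_K : K \in E(\mathcal{H})\}$ spans the entire family, so $|E(\mathcal{H})| \ge \binom{n}{k}-\binom{n-\ell}{k}$ by (ii). Checking that $M(n,k,\ell)$ has exactly this many edges, is $K_{k+\ell}^k$-free, and is saturated confirms that the bound is attained.

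For the characterisation I would assume equality, equivalently that $\mathcal{H}$ has exactly $\binom{n-\ell}{k}$ non-edges, and to each non-edge $e$ attach an $\ell$-set $W_e$ disjoint from $e$ for which $e\cup W_e$ spans a $K_{k+\ell}^k$ in which $e$ is the unique missing face. The aim is to show that all non-edges lie inside a common set of $n-\ell$ vertices, i.e. that a single fixed $\ell$-set $T$ can serve as $W_e$ for every $e$ with every non-edge avoiding $T$; this at once yields $\mathcal{H}\cong M(n,k,\ell)$. I would try to obtain this rigidity inductively: the link $L_v$ of any vertex is $K_{k+\ell-1}^{k-1}$-saturated, and Pascal's identity $\binom{n}{k}-\binom{n-\ell}{k} = \bigl(\binom{n-1}{k}-\binom{n-1-\ell}{k}\bigr)+\bigl(\binom{n-1}{k-1}-\binom{n-1-\ell}{k-1}\bigr)$ shows that in the equality case every link is itself extremal, hence by the inductive characterisation isomorphic to an $M(n-1,k-1,\ell)$; matching these links across vertices should pin down the global structure.

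The main obstacle is precisely this uniqueness half. The rank argument is blind to structure and yields only the numerical bound, so the characterisation must come from the combinatorics of the tight case. The delicate point is that deleting a vertex need not preserve saturation: a non-edge may have \emph{all} of its completing $\ell$-sets passing through the deleted vertex, so the inductive step must either locate a vertex whose removal keeps $\mathcal{H}$ saturated or carefully account for such ``essential'' non-edges. Showing that these degenerate configurations cannot persist when the edge count is exactly $\binom{n}{k}-\binom{n-\ell}{k}$, and thereby forcing the common $\ell$-set $T$, is where I expect the real work to lie.
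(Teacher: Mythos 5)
Your proposal splits into two halves of very different quality, so let me address them separately. Note first that the paper does not prove this statement at all --- it imports it from Bollob\'as's 1965 paper --- so the benchmark is the classical argument behind that citation: to each non-edge $e$ one attaches the $\ell$-set $W_e$ completing it to a clique and the residual set $U_e = V \setminus (e \cup W_e)$; then $e \cap U_e = \emptyset$ while $e' \cap U_e \neq \emptyset$ for every other non-edge $e'$ (otherwise $e'$ would be a $k$-subset of $e \cup W_e$ different from $e$, hence an edge), and Bollob\'as's set-pair inequality bounds the number of non-edges by $\binom{k + (n-k-\ell)}{k} = \binom{n-\ell}{k}$. Your lower-bound half takes a genuinely different and legitimate route: the reduction ``saturated $\Rightarrow$ weakly saturated, with non-edges addable in any order'' is correct, and the Frankl--Kalai linear-algebra bound for weak saturation does give $\binom{n}{k}-\binom{n-\ell}{k}$. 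One caveat on the construction you sketch: plain wedge products $\wedge_{i \in K}\, x_i$ of generic vectors $x_i \in \mathbb{R}^{k+\ell}$ do \emph{not} satisfy your property (i) --- for a $(k+\ell)$-set $S$ those $\binom{k+\ell}{k}$ wedges form a \emph{basis} of $\Lambda^k(\mathbb{R}^{k+\ell})$, so none lies in the span of the others. A construction that does work is to take $w_K$ to be the image of $e_K = \wedge_{i \in K}\, e_i$ in the quotient $\Lambda^k(\mathbb{R}^n)/\Lambda^k(W)$ for a generic subspace $W$ of codimension $\ell$: every $(k+\ell)$-set then carries a dependence with full support, and the family has rank exactly $\binom{n}{k}-\binom{n-\ell}{k}$.

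The genuine gap is the equality half, and the inductive plan you outline cannot work as stated because its two key claims are false. First, the link of a vertex of a $K_{k+\ell}^k$-saturated hypergraph need not be $K_{k+\ell-1}^{k-1}$-saturated, nor even $K_{k+\ell-1}^{k-1}$-free: in the extremal example $M(n,k,\ell)$ itself, the link of one of the $\ell$ core vertices is the complete hypergraph $K_{n-1}^{k-1}$. Second, Pascal's identity does not force both summands to be extremal: deleting a core vertex $v$ from $M(n,k,\ell)$ leaves $M(n-1,k,\ell-1)$, whose size is strictly \emph{below} $\binom{n-1}{k}-\binom{n-1-\ell}{k}$, compensated by the link of $v$ being strictly above $\binom{n-1}{k-1}-\binom{n-1-\ell}{k-1}$; deletion shifts the parameter $\ell$ rather than preserving extremality, and moreover $M(n-1,k,\ell-1)$ is not $K_{k+\ell}^k$-saturated, so the inductive hypothesis is unavailable exactly where you need it. There is also a structural obstruction to your overall strategy: once you relax to weak saturation, uniqueness is provably lost (minimum weakly saturated hypergraphs are far from unique --- already for $k=2$, $\ell=1$ every path, not just the star, is a minimum weakly $K_3$-saturated graph), so no rank argument can recover the characterisation; it must use full saturation. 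The clean repair is to run the set-pair argument above and invoke the equality case of Bollob\'as's inequality: equality forces the non-edges to be precisely all $k$-subsets of a fixed $(n-\ell)$-set $Y$, with $U_e = Y \setminus e$, which says exactly that $\mathcal{H} \cong M(n,k,\ell)$. As written, your proposal establishes the inequality but not the characterisation, and you correctly flag this yourself.
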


We are now prepared to prove our first result.

\begin{prop}\label{prop:lb}
Let $\mathcal{H}$ be a friendship $r$-hypergraph on $n$ vertices and let $\mathcal{H}'$ be its $K_{r+1}^r$-decomposition. Then 
\[
|E(\mathcal{H}')| \geq \frac{1}{r}\binom{n-1}{r-1}.
\] 
Furthermore, if equality holds, then every set of $r-1$ vertices is either contained in exactly one hyperedge of $\mathcal{H}'$ or exactly $(n-r+1)/2$ hyperedges of $\mathcal{H}'$.
\end{prop}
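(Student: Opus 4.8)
The plan is to work throughout with the \emph{degrees} of $(r-1)$-sets. First I would establish the structural fact underlying everything. For an $(r-1)$-set $A$ write $N(A)=\{v\notin A:A\cup\{v\}\in E(\calH)\}$ for its neighbourhood, so that $\deg_\calH(A)=|N(A)|$, and let $d(A)$ denote the number of edges of $\calH'$ containing $A$. By Lemma \ref{lem:decomp}(ii) the hyperedges through $A$ pair up: the edge $A\cup\{v\}$ lies in a unique $K_{r+1}^r$ whose remaining vertex $\iota_A(v)$ also lies in $N(A)$, and $v\mapsto\iota_A(v)$ is a fixed-point-free involution of $N(A)$ whose orbits are exactly the copies of $K_{r+1}^r$ through $A$. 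Hence $|N(A)|=2d(A)$; in particular $\deg_\calH(A)$ is even, $d(A)\ge 1$ by Lemma \ref{lem:decomp}(i), and $d(A)\le (n-r+1)/2$ since $N(A)\subseteq V\setminus A$. This already singles out $1$ and $(n-r+1)/2$ as the extreme values of $d(A)$.

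Second, I would reduce the bound to a local statement by fixing one vertex $u$ and double counting incidences between $(r-1)$-sets and edges of $\calH'$, splitting on whether a set contains $u$. This yields
\[
\binom{r+1}{2}|E(\calH')|=\sum_{A\not\ni u}d(A)+\binom{r}{2}\deg_{\calH'}(u),
\]
while a second short count gives $\deg_\calH(u)=r\,\deg_{\calH'}(u)$, so $\binom{r}{2}\deg_{\calH'}(u)=\tfrac{r-1}{2}\,|E(L_u)|$, where $L_u$ is the link of $u$ (the $(r-1)$-uniform hypergraph of sets $A$ with $A\cup\{u\}\in E(\calH)$). Writing $|E(L_u)|$ as a sum of indicators and substituting $d(A)=|N(A)|/2$, the target $|E(\calH')|\ge\frac1r\binom{n-1}{r-1}$ becomes
\[
\sum_{A\not\ni u}\Big(|N(A)|+(r-1)\,[\,A\cup\{u\}\in E(\calH)\,]\Big)\ \ge\ (r+1)\binom{n-1}{r-1},
\]
the sum running over the $\binom{n-1}{r-1}$ sets avoiding $u$. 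The summand is at least $r+1$ whenever $A\cup\{u\}\in E(\calH)$, using only $|N(A)|\ge 2$, so everything comes down to controlling the \emph{bad} sets $A$ with $A\cup\{u\}\notin E(\calH)$.

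The heart of the matter, and the step I expect to be the main obstacle, is therefore the claim that for a \emph{suitably chosen} $u$ (a vertex of maximum degree) every bad $(r-1)$-set $A$ has $|N(A)|\ge r+1$, and in fact $N(A)=V\setminus A$. The friendship condition is what should force this: if $A\cup\{u\}\notin E(\calH)$ has unique friend $x$, then $A\cup\{x\}\in E(\calH)$ and $(A\setminus\{a\})\cup\{u,x\}\in E(\calH)$ for every $a\in A$, which says precisely that adding $A$ to $L_u$ completes a copy of $K_r^{r-1}$ (on the vertex set $A\cup\{x\}$). Thus $L_u$ is saturated ``from above'' with respect to $K_r^{r-1}$, and this is where Theorem \ref{thm:sat} enters to bound the relevant neighbourhoods and to rule out a bad set of small degree: if some bad $A$ had small $|N(A)|$ one should be able to exhibit a vertex whose degree exceeds that of $u$, contradicting maximality. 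The delicate point is making this comparison quantitative and, crucially, choosing $u$ correctly, since the non-universal vertices of a universal friendship hypergraph already carry bad sets of degree $2$ and so the claim is false for an arbitrary $u$.

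Finally I would assemble the pieces. The local inequality gives $|E(\calH')|\ge\frac1r\binom{n-1}{r-1}$, and the degree dichotomy is read off from the equality case: if equality holds then every inequality above is tight, which forces $d(A)=1$ for each $A$ with $A\cup\{u\}\in E(\calH)$ and (via the key claim) $N(A)=V\setminus A$, i.e. $d(A)=(n-r+1)/2$, for each remaining $A$ avoiding $u$. A parallel count over the $(r-1)$-sets that \emph{contain} $u$, together with the fact that tightness makes $u$ behave like a universal vertex, then pins down those degrees as well, giving $d(A)\in\{1,(n-r+1)/2\}$ for every $(r-1)$-set, as claimed.
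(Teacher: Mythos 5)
Your preliminary steps are sound: the fixed-point-free involution giving $\deg_{\mathcal{H}}(A)=2d(A)$ (hence $1\le d(A)\le (n-r+1)/2$) is correct, and so is the double-counting reduction of the target bound to
\[
\sum_{A\not\ni u}\Bigl(|N(A)|+(r-1)\,[\,A\cup\{u\}\in E(\mathcal{H})\,]\Bigr)\ \ge\ (r+1)\binom{n-1}{r-1}.
\]
The problem is the step you yourself identify as the heart of the matter: the claim that for a suitably chosen vertex $u$ of maximum degree every bad set $A$ satisfies $|N(A)|\ge r+1$ (indeed $N(A)=V\setminus A$). This claim is not merely unproven --- it is false, and no choice of $u$ can repair it. Consider the cube-constructed friendship $3$-hypergraph of J{\o}rgensen and Sillasen \cite{JS14}: vertex set $\{0,1\}^k$, edges the triples that are constant in no coordinate. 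Coordinatewise XOR with any fixed vector is an automorphism, so this hypergraph is vertex-transitive and \emph{every} vertex has maximum degree. Take $k=3$, $u=\mathbf{0}$ and $A=\{001,011\}$: then $A\cup\{u\}$ is not an edge (the first coordinate is constantly $0$), so $A$ is bad, yet $N(A)=\{100,110\}$ has size $2$. More generally a bad pair at Hamming distance $d$ has $|N(A)|=2^{d}$, which can be as small as $2$, and by vertex-transitivity such configurations exist for every choice of $u$. So the pointwise bound ``every summand is at least $r+1$'' fails for all $u$; the inequality is true only in aggregate, because good sets can contribute up to $(n-r+1)+(r-1)=n$, far more than $r+1$. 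Since your equality analysis is read off from tightness of this pointwise bound, it collapses as well.

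This is precisely the difficulty the paper's argument is built to avoid: it never bounds individual $(r-1)$-sets, but instead uses a charging scheme. Each set $R\subseteq V\setminus\{a\}$ outside the link of $a$ is charged to the unique $q(R)\in E(\mathcal{H}')$ containing $R\cup\{u_R\}$, where $u_R$ is the friend of $R\cup\{a\}$; your own observation (adding a bad $A$ to the link completes a copy of $K_r^{r-1}$) then shows that inside the $r+1$ vertices of any $q\in E(\mathcal{H}')$ the link of $a$ is $K_r^{r-1}$-saturated, so by Theorem \ref{thm:sat} at most $\binom{r}{r-1}=r$ sets are charged to each $q$. Sets in the link of $a$ account for exactly $r$ sets per hyperedge of $\mathcal{H}'$ through $a$, so summing over hyperedges of $\mathcal{H}'$ --- rather than over $(r-1)$-sets --- gives $\binom{n-1}{r-1}\le r|E(\mathcal{H}')|$ with no claim about any individual degree, and the equality statement follows from the uniqueness part of Theorem \ref{thm:sat} applied per hyperedge. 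If you want to salvage your write-up, the saturation observation must be deployed in this per-hyperedge (charging) form; as a tool for forcing individual neighbourhoods $N(A)$ to be large it cannot work.
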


\begin{proof}
Fix $a \in V(\mathcal{H})$ and let $\mathcal{H}(a)$ be the neighbourhood hypergraph of $a$: the $(r-1)$-uniform hypergraph with vertex set $V(\mathcal{H})\backslash \{a\}$ whose hyperedges consist of all hyperedges in $\mathcal{H}$ containing $a$, with $a$ omitted. That is, $E(\calH(a)) = \{E\backslash \{a\}: E \in E(\mathcal{H}), a \in E\}$. Let 
\[
\mathcal{R} = {V(\mathcal{H}(a)) \choose {r-1}}= \{A \subseteq V(\mathcal{H}(a)): |A|=r-1\}.
\]
For each $R \in \R \backslash E(\mathcal{H}(a))$ let $u$ be the unique friend of $\{a\} \cup R$ and let $q(R)$ be the unique hyperedge of $\calH'$ containing $\{u\} \cup R$. Define a colouring $f_a$: $\R \rightarrow \calH' \cup \{a\}$ by setting
\[
f_a(R) = 
	\begin{cases}
		a	&\text{if } R \in E(\mathcal{H}(a));\\
		q(R)	&\text{otherwise},
	\end{cases}
\]
for each $R \in \R$. If $R = \{x_1, \ldots, x_{r-1}\} \in \R \backslash E(\mathcal{H}(a))$, satisfies $f_a(R)= q(R) = R \cup \{u,v\}$ where $u$ is the unique friend of $R \cup \{a\}$, then the choice of $u$ implies that for each $i \in \{1,\ldots,r-1\}$, 
\[
R \cup \{a,u\} \setminus \{x_i\} \in \mathcal{H}
\]
and hence $R \cup \{u\} \setminus \{x_i\} \in E(\mathcal{H}(a))$.  That is, on the subgraph of $\calH(a)$ induced by the vertices of $q$, the set $R$ is the last `missing' hyperedge in an $a$-coloured copy of $K_{r}^{r-1}$. In particular, this means that this subgraph is $K_{r}^{r-1}$-saturated. Hence, by Theorem \ref{thm:sat}, the number of $q$-coloured edges in this subgraph is at most $\binom{r}{r-1} = r$ with equality if and only if the $a$-coloured edges form a copy of $M(r+1, r-1, 1)$. By double-counting the sets of size $r-1$ in $V(\mathcal{H}(a))$, we obtain
\begin{align}
\binom{n-1}{r-1}
	& = \sum_{\underset{a \in q}{q \in \mathcal{H}'}} r + \sum_{\underset{a \notin q}{q \in \mathcal{H}'}} \left \vert \left\{ T \in \mathcal{R} \setminus E(\mathcal{H}(a)) : T \text{ is } q\text{-coloured} \right\}\right \vert \notag\\
	& \leq r | \{q \in E(\mathcal{H}') : a \in q\}| + r | \{q \in E(\mathcal{H}') : a \notin q\}| \label{eq:r-1-sets}\\
	& = r |E(\mathcal{H}')|. \notag
\end{align}
Hence, $|\mathcal{H}'| \geq \frac{1}{r} \binom{n-1}{r-1}$, as required for the first statement of the proposition.

To prove the second part, note that if equality holds, then, in particular, equality holds in \eqref{eq:r-1-sets}. We have that for each $q \in E(\mathcal{H}')$ not containing $a$, the hyperedges in the subgraph of $\calH(a)$ induced by the vertices of $q$ are $a$-coloured and every non-hyperedge is $q$-coloured. Thus any non-hyperedge of $\calH(a)$ is contained in exactly one hyperedge of $\mathcal{H'}$, namely $q$. 

Every set of size $r-1$ appears in some hyperedge in $\mathcal{H}'$. If such a set $T$ is not in some hypergraph $\mathcal{H}(a)$ for some $a \not\in T$, then $T$ occurs in exactly one hyperedge in $\mathcal{H}'$.  Otherwise, for every $a \notin T$, $T \in \mathcal{H}(a)$.    This completes the proof of the second part of the proposition.
\end{proof}

If, as in Proposition \ref{prop:lb}, a set $T \subseteq V(\calH)$ of $r-1$ vertices is contained in exactly one hyperedge of $\calH'$ call it \emph{unsociable}. Otherwise, say that $T$ is \emph{sociable}. The following lemma is the final piece used to prove Theorem \ref{thm:main}. 

\begin{prop}\label{prop:soc-sets}
Let $\mathcal{H}$ be a friendship $r$-hypergraph on $n \geq r+1$ vertices and let $\mathcal{F}$ be the $(r-1)$-uniform hypergraph of sociable sets in $\mathcal{H}$. If $|\mathcal{H}| = \frac{(r+1)}{r} \binom{n-1}{r-1}$, then $|E(\mathcal{F})| = \binom{n-1}{r-2}$ and $\mathcal{F} = M(n, r-1, 1)$.
\end{prop}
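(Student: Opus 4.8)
The plan is to deduce everything from the equality case of Proposition~\ref{prop:lb}. By \eqref{ob:edges}, the hypothesis $|\mathcal{H}|=\frac{r+1}{r}\binom{n-1}{r-1}$ is equivalent to $|E(\mathcal{H}')|=\frac{1}{r}\binom{n-1}{r-1}$, so every $(r-1)$-set is either unsociable (in exactly one edge of $\mathcal{H}'$) or sociable (in exactly $(n-r+1)/2$ edges); recall from the discussion after Proposition~\ref{prop:lb} that $T$ is sociable precisely when $T\cup\{b\}\in\mathcal{H}$ for every $b\notin T$. I will assume $n\ge r+2$, the case $n=r+1$ being the trivial hypergraph $K_{r+1}^r$. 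To obtain the cardinality I would double count the incidences $(T,q)$ with $|T|=r-1$ and $T\subseteq q\in E(\mathcal{H}')$: grouping the $(r-1)$-subsets of each edge of $\mathcal{H}'$ gives $\binom{r+1}{2}|E(\mathcal{H}')|$, while grouping by $T$ gives $u+\frac{n-r+1}{2}s$, where $s=|E(\mathcal{F})|$ and $u=\binom{n}{r-1}-s$ count the sociable and unsociable sets. Substituting $|E(\mathcal{H}')|=\frac{1}{r}\binom{n-1}{r-1}$ and solving the resulting linear equation (dividing by $\frac{n-r-1}{2}$, which is where $n\ge r+2$ is used) yields $s=\binom{n-1}{r-2}$, the first assertion, and hence $u=\binom{n-1}{r-1}$. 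Since $M(n,r-1,1)$ is exactly $K_r^{r-1}$-saturated and $|M(n,r-1,1)|=\binom{n-1}{r-2}$, by the uniqueness clause of Theorem~\ref{thm:sat} it then suffices to prove that $\mathcal{F}$ is itself $K_r^{r-1}$-saturated.

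For $K_r^{r-1}$-freeness, suppose some $r$-set $W$ had all of its $(r-1)$-subsets sociable. Then for every $x\notin W$ and every $(r-1)$-subset $A\subseteq W$ we have $A\cup\{x\}\in\mathcal{H}$, so $x$ is a friend of $W$; as $n\ge r+2$ there are at least two such $x$, contradicting uniqueness of friends. Hence $\mathcal{F}$ is $K_r^{r-1}$-free.

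The heart of the argument is the saturation condition that adding any unsociable $T$ creates a $K_r^{r-1}$. For this I would first extract, for each $q\in E(\mathcal{H}')$, the local structure supplied by the equality analysis in the proof of Proposition~\ref{prop:lb}: for each $b\notin q$ the link of $b$ restricted to $q$ is a copy of $M(r+1,r-1,1)$, so there is a centre $c_q(b)\in q$ with the property that, for an $(r-1)$-set $S\subseteq q$, one has $S\cup\{b\}\in\mathcal{H}$ iff $c_q(b)\in S$. Writing $C_q=\{c_q(b):b\notin q\}$, a set $S\subseteq q$ of size $r-1$ is sociable iff $C_q\subseteq S$ (its two completions inside $q$ are automatic edges, and a completion $b\notin q$ is an edge iff $c_q(b)\in S$); consequently $q$ contains exactly $\binom{r+1-|C_q|}{2}$ sociable $(r-1)$-subsets. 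The key claim is that $|C_q|=1$ for every $q$. To prove it I would double count the pairs $(S,q)$ with $S$ an unsociable $(r-1)$-set contained in $q\in E(\mathcal{H}')$: each unsociable $S$ lies in its unique edge of $\mathcal{H}'$, so the total is $u=\binom{n-1}{r-1}$, while summing over $q$ gives $\sum_q\big(\binom{r+1}{2}-\binom{r+1-|C_q|}{2}\big)$. Using $|E(\mathcal{H}')|=\frac{1}{r}\binom{n-1}{r-1}$ this rearranges to $\sum_q\binom{r+1-|C_q|}{2}=\binom{r}{2}\,|E(\mathcal{H}')|$. Since $|C_q|\ge 1$ and $\binom{r+1-k}{2}\le\binom{r}{2}$ with equality only at $k=1$, every term must attain its maximum, forcing $|C_q|=1$ for all $q$.

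Granting $|C_q|=1$, write $C_q=\{y_q\}$. Given an unsociable $T$, let $q$ be its unique $K_{r+1}^r$ and choose $b\notin q$ with $T\cup\{b\}\notin\mathcal{H}$ (such $b$ exists and lies outside $q$, since $T$'s two completions inside $q$ are edges); then $y_q=c_q(b)\notin T$. Now $W:=T\cup\{y_q\}$ is an $r$-set all of whose $(r-1)$-subsets other than $T$ contain $y_q$, hence are sociable by the criterion above, so adding $T$ to $\mathcal{F}$ completes a copy of $K_r^{r-1}$ on $W$. Thus $\mathcal{F}$ is $K_r^{r-1}$-saturated, and Theorem~\ref{thm:sat} together with $|E(\mathcal{F})|=\binom{n-1}{r-2}$ forces $\mathcal{F}\cong M(n,r-1,1)$, completing the proof. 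I expect the main obstacle to be the claim $|C_q|=1$: the natural choice $W=T\cup\{y_q\}$ only works once one knows the link centres $c_q(b)$ are independent of $b$, and I would establish this not by manipulating friends directly but through the global counting identity above, in which the averaged value of $\binom{r+1-|C_q|}{2}$ is seen to coincide with its pointwise maximum.
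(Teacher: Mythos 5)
Your proposal is correct, and most of its skeleton coincides with the paper's proof: the double count of incidences $(T,q)$ giving $|E(\mathcal{F})|=\binom{n-1}{r-2}$ (with the same division by $\frac{n-r-1}{2}$ and the same separate treatment of $n=r+1$), the $K_r^{r-1}$-freeness argument via uniqueness of friends, and the final appeal to the equality case of Theorem \ref{thm:sat} are all exactly the paper's steps. Where you genuinely diverge is the saturation step, which you rightly identify as the heart of the matter. The paper argues locally and directly from the definitions: if $X$ is unsociable with unique $\mathcal{H}'$-edge $X\cup\{u,v\}$, then for any $w\notin X\cup\{u,v\}$ the friend $f$ of $X\cup\{w\}$ satisfies $X\cup\{f\}\in\mathcal{H}$, and since that edge must lie in the unique $\mathcal{H}'$-edge containing $X$, necessarily $f\in\{u,v\}$; taking $f=u$, the $r$ hyperedges $(X\cup\{w,u\})\setminus\{x_i\}$ lie in $\mathcal{H}'$-edges distinct from each other and from $X\cup\{u,v\}$, so each $(r-1)$-set $(X\cup\{u\})\setminus\{x_i\}$ lies in two distinct $\mathcal{H}'$-edges and is sociable, giving the required $K_r^{r-1}$ minus $X$ on $X\cup\{u\}$. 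You instead import the equality analysis from \emph{inside} the proof of Proposition \ref{prop:lb} --- that for every vertex $b$ and every $q\in E(\mathcal{H}')$ with $b\notin q$ the link of $b$ restricted to $q$ is a copy of $M(r+1,r-1,1)$ with centre $c_q(b)$ --- and then run a second global double count (valid only because you establish $u=\binom{n-1}{r-1}$ beforehand) to get $\sum_q\binom{r+1-|C_q|}{2}=\binom{r}{2}|E(\mathcal{H}')|$, forcing $|C_q|=1$ for every $q$ since each summand is at most $\binom{r}{2}$; saturation then follows from the criterion that $S\subseteq q$ is sociable iff $C_q\subseteq S$. I checked both the centred-link extraction (it is indeed what equality in \eqref{eq:r-1-sets} yields, for every choice of $a$) and your counting step; both are sound. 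The trade-offs: the paper's argument is shorter, needs only the \emph{statement} of Proposition \ref{prop:lb}, and is independent of the cardinality computation, whereas your route leans on internal details of that proposition's proof and requires computing $|E(\mathcal{F})|$ before proving saturation. In exchange you obtain a strictly stronger structural fact --- in each $K_{r+1}^r$ the sociable $(r-1)$-sets are precisely those through one distinguished vertex $y_q$ --- which essentially anticipates the universal-vertex structure exploited afterwards in the proof of Theorem \ref{thm:main}.
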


\begin{proof}
As before, let $\mathcal{H}'$ be the $K_{r+1}^r$-decomposition of $\mathcal{H}$.  The result follows from Theorem \ref{thm:sat} by showing that $\mathcal{F}$ is $K_r^{r-1}$-saturated and that $|E(\mathcal{F})| =\binom{n-1}{r-2}$. If $n=r+1$ then $\calH=K_{r+1}^r$ and the result follows. Thus we may assume that $n > r+1$.

First we show that for each unsociable set $T$, there is a copy of $K_r^{r-1}$ containing $T$ in $E(\mathcal{F}) \cup T$. Let $X = \{x_1, x_2, \ldots, x_{r-1}\}$ be an unsociable set; thus there exists a unique hyperedge $X \cup \{u, v\}$ of $\mathcal{H}'$ containing $X$.  For any $w \notin X \cup \{u, v\}$, consider the unique friend of $X \cup \{w\}$.  Since there is only one hyperedge in $\mathcal{H}'$ containing $X$, the unique friend of $X \cup \{w\}$ is either $u$ or $v$.  Suppose, without loss of generality, that the unique friend is $u$.  Then, $\mathcal{H}$ contains hyperedges
\begin{equation*}\label{eq:unsoc}
\{x_1, x_2, \ldots, x_{r-1}, w, u\} \setminus \{x_i\},\  i \in \{1, \ldots, r-1\}.
\end{equation*}
Moreover, each of these sets lie in distinct hyperedges of $\calH'$, and none occur in the same hyperedge as $X \cup \{w\}$. Therefore there are at least two hyperedges of $\calH'$ containing $\{x_1, x_2, \ldots, x_{r-1}, u\} \setminus \{x_i\}$ and so these are all sociable sets. Indeed, the subgraph of $\mathcal{F}$ induced by the
vertices $\{x_1, x_2, \ldots, x_{r-1}, u\}$ is precisely $K_r^{r-1}\backslash \{x_1, x_2, \ldots, x_{r-1}\}$.

To see that $\mathcal{F}$ is $K_r^{r-1}$-free, suppose for a contradiction that the set of vertices $Y = \{y_1, y_2, \ldots, y_r\}$ induce a copy $K_r^{r-1}$ in $\mathcal{F}$.  That is, for every $i \in \{1,\ldots, r\}$, the set $Y \setminus \{y_i\}$ is sociable.  Then, for every $x \notin Y$ and $i \in \{1,\ldots,r\}$, there is a hyperedge $Y \cup \{x\} \setminus \{y_i\} \in \mathcal{H}$.  This means that $x$ is a friend of $Y$.  By the uniqueness of friends, $n \le r+1$, a contradiction.

Finally we show that $|E(\mathcal{F})| =\binom{n-1}{r-2}$. By double counting all pairs $(T,E)$, where $E \in E(\mathcal{H}')$ and $T$ is an $(r-1)$-subset of $E$, we get:
\[
\frac{(n-r+1)}{2}|E(\mathcal{F})| + \left(\binom{n}{r-1} - |E(\mathcal{F})| \right) = \vert E(\mathcal{H}' )\vert \binom{r+1}{r-1} = \frac{(r+1)}{2}\binom{n-1}{r-1}.
\]
Rearranging this equation yields the desired result.
\end{proof}

We can now complete the proof of Theorem \ref{thm:main} which is re-stated here.  Note that the existence of a universal friendship hypergraph on $n$ vertices implies the existence of an $S(r-1, r, n-1)$ Steiner system , with all of the corresponding divisibility conditions on $n$ in terms of $r$.

\begin{thm-main1}
Let $r \ge 3$, let $\mathcal{H}$ be a friendship $r$-hypergraph on $n \ge r+1$ vertices and let $\mathcal{H}'$ be its $K_{r+1}^r$-decomposition. Then,
$$|E(\mathcal{H}')| \ge \frac{1}{r}\binom{n-1}{r-1}.$$
Moreover, equality holds if and only if $\calH$ is universal. 
\end{thm-main1}

\begin{proof}[Proof of Theorem \ref{thm:main}]
The lower bound immediately follows from Proposition \ref{prop:lb} and the observation in equation \eqref{ob:edges}. Now suppose that $|E(\mathcal{H}')| = \frac{1}{r}\binom{n-1}{r-1}$.  The remaining part of the proof uses Proposition \ref{prop:soc-sets} to show that $\mathcal{H}$ is universal.  Set $V = V(\mathcal{H})$.
Indeed, by Proposition \ref{prop:soc-sets} there is a vertex $u \in V$ such that each set $T$ of $r-1$ vertices of $V$ is sociable if and only if $u \in T$.  

To see that the vertex $u$ is universal in $\mathcal{H}$, note that for every $B \subseteq V \setminus \{u\}$ with $|B| = r-2$, the set $B \cup \{u\}$ is sociable and furthermore, these are the only sociable sets.  Thus, for every $A \subseteq V\setminus \{u\}$ with $|A| = r-1$, $A \cup \{u\} \in \mathcal{H}$.

Consider now the hyperedges within $V \setminus \{u\}$.  It remains to show that these hyperedges form a Steiner system.  For any $(r-1)$-set $B \subseteq V\setminus \{u\}$, $B$ is not sociable and hence occurs in exactly $2$ hyperedges in $\mathcal{H}$.  One of the hyperedges in $\mathcal{H}$ containing $B$ is $B \cup \{u\}$ and so within $V \setminus \{u\}$, there is a unique hyperedge containing $B$.  This shows that $\mathcal{H}[V \setminus \{u\}]$ is an $S(r-1, r, n-1)$ Steiner system, which completes the proof.
\end{proof}

\section{Upper bounds}\label{sec:ub}

In Theorem \ref{thm:ub} to come, we give an upper bound on the number of hyperedges in a friendship $r$-hypergraph for any $r \geq 3$.  In the special case when $r=3$, this shows that the $K_4^3$-decomposition of any friendship $3$-hypergraph has at most $n^2(n-1)/48$ hyperedges, improving the bound in \eqref{eq:LRSS-ub} given by Li and van Rees \cite{LvR13} by a constant factor.

Before proceeding to the proof of the upper bound, some technical lemmas are given to be used later.

\begin{lem}\label{lem:path-cmpts}
Let $G$ be a graph on $n$ vertices with minimum degree $\delta(G) \geq 1$ and the property that any pair of vertices with degree $1$ are not adjacent.  Then,
\[
|E(G)| \geq \left\lceil \frac{2n}{3}\right\rceil.
\]
\end{lem}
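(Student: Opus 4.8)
The plan is to argue componentwise. Write $G$ as the disjoint union of its connected components $C_1, \dots, C_t$, where $C_j$ has $m_j$ vertices and $e_j$ edges, so that $\sum_j m_j = n$ and $\sum_j e_j = |E(G)|$. The key observation is that the two hypotheses together force every component to have at least three vertices. Indeed, $\delta(G) \ge 1$ rules out isolated vertices, so each $m_j \ge 2$; and if some component were a single edge, then both of its endpoints would be vertices of degree $1$ that are adjacent, contradicting the hypothesis that no two degree-$1$ vertices are adjacent. Hence $m_j \ge 3$ for every $j$.

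Next I would bound each $e_j$ from below using only connectivity. Since $C_j$ is connected on $m_j$ vertices, $e_j \ge m_j - 1$. A direct check shows that $m - 1 \ge \lceil 2m/3 \rceil$ for every integer $m \ge 3$: the inequality $m - 1 \ge 2m/3$ is equivalent to $m \ge 3$, and since $m - 1$ is an integer lying above $2m/3$, it is at least the ceiling $\lceil 2m/3 \rceil$. Applying this with $m = m_j$ gives $e_j \ge \lceil 2 m_j / 3 \rceil$ for each $j$.

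Finally, summing over components and using that the ceiling function is superadditive (that is, $\sum_j \lceil x_j \rceil \ge \lceil \sum_j x_j \rceil$, which holds because the left-hand side is an integer lying above $\sum_j x_j$), I obtain
\[
|E(G)| = \sum_{j=1}^t e_j \;\ge\; \sum_{j=1}^t \left\lceil \frac{2 m_j}{3} \right\rceil \;\ge\; \left\lceil \frac{2}{3}\sum_{j=1}^t m_j \right\rceil = \left\lceil \frac{2n}{3} \right\rceil,
\]
as desired.

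There is no serious obstacle in this argument; the only points requiring care are recognising that the non-adjacency hypothesis is used \emph{solely} to exclude single-edge (i.e.\ $K_2$) components, while connectivity alone handles all larger components, and keeping track of the ceiling when passing from the per-component estimates to the global one. It is worth noting that a disjoint union of paths on three vertices attains equality, so the bound is sharp.
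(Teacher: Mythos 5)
Your proof is correct and follows essentially the same route as the paper's: both arguments use the two hypotheses exactly as you do, to rule out isolated vertices and single-edge components so that every component has at least $3$ vertices, and both rest on the spanning-tree bound $e_j \ge m_j - 1$. The only difference is bookkeeping at the end --- the paper sums $m_j - 1$ over components to get $|E(G)| \ge n - c(G) \ge n - \lfloor n/3 \rfloor = \lceil 2n/3 \rceil$, whereas you take ceilings per component and invoke superadditivity --- which yields the same bound by the same ideas.
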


\begin{proof}
Let $c(G)$ be the number of connected components in $G$.  Since $\delta(G) \geq 1$, there are no components consisting of a single vertex and since any two vertices of degree $1$ are not adjacent, there are no components of $G$ consisting of a single edge.  Thus, every component of $G$ contains at least $3$ vertices and so $c(G) \leq \lfloor n/3 \rfloor$.  Since each connected component of $G$ contains a spanning tree,
\[
|E(G)| \geq n - c(G) \geq n - \left\lfloor \frac{n}{3} \right\rfloor = \left \lceil \frac{2n}{3} \right \rceil,
\]
which completes the proof.
\end{proof}

\begin{cor}\label{cor:dual-gr}
Let $r \geq 3$ and let $G$ be a graph on $r+1$ vertices with maximum degree $\Delta(G) \leq r-1$.  If $G$ has the property that any two vertices of degree $r-1$ are adjacent, then
\[
|E(G)| \leq \binom{r+1}{2} - \left\lceil \frac{2(r+1)}{3} \right\rceil = \left\lfloor \frac{(r+1)(3r-4)}{6} \right\rfloor.
\]
\end{cor}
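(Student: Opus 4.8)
The plan is to pass to the complement graph and invoke Lemma \ref{lem:path-cmpts}. Write $\overline{G}$ for the complement of $G$ on the same $r+1$ vertices, so that every vertex $v$ satisfies $\deg_{\overline{G}}(v) = r - \deg_G(v)$, and $|E(G)| + |E(\overline{G})| = \binom{r+1}{2}$.

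The key step is to translate the two hypotheses on $G$ into the hypotheses of Lemma \ref{lem:path-cmpts} for $\overline{G}$. The bound $\Delta(G) \le r-1$ gives $\deg_{\overline{G}}(v) = r - \deg_G(v) \ge 1$ for every $v$, so $\delta(\overline{G}) \ge 1$. Moreover, since $G$ has exactly $r+1$ vertices, a vertex has degree $r-1$ in $G$ precisely when it has degree $1$ in $\overline{G}$, and two vertices are adjacent in $G$ precisely when they are non-adjacent in $\overline{G}$. Hence the condition ``any two vertices of degree $r-1$ in $G$ are adjacent'' becomes exactly ``any two vertices of degree $1$ in $\overline{G}$ are non-adjacent''. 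These are precisely the hypotheses of Lemma \ref{lem:path-cmpts} applied to $\overline{G}$ on $n = r+1$ vertices.

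Applying that lemma then yields $|E(\overline{G})| \ge \lceil 2(r+1)/3 \rceil$, and subtracting from $\binom{r+1}{2}$ gives the stated upper bound on $|E(G)|$. The final displayed equality is a routine arithmetic identity: expanding shows that $(r+1)(3r-4)/6 = \binom{r+1}{2} - 2(r+1)/3$, and since $\binom{r+1}{2}$ is an integer one has $\lfloor \binom{r+1}{2} - 2(r+1)/3 \rfloor = \binom{r+1}{2} - \lceil 2(r+1)/3 \rceil$.

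I expect no substantial obstacle here; the only point requiring care is the correct translation of the degree conditions under complementation, in particular that ``degree $r-1$ in $G$'' corresponds to ``degree $1$ in $\overline{G}$'' precisely because $G$ has $r+1$ vertices, so that the non-adjacency clause of Lemma \ref{lem:path-cmpts} matches the adjacency hypothesis on $G$. The closed-form identity is then an elementary verification.
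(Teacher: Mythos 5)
Your proof is correct and is essentially identical to the paper's: both pass to the complement $\overline{G}$, translate the degree hypotheses into those of Lemma \ref{lem:path-cmpts} (minimum degree at least $1$, no two degree-$1$ vertices adjacent), and subtract the resulting lower bound from $\binom{r+1}{2}$. Your explicit verification of the floor/ceiling identity is a small addition the paper leaves implicit, but the argument is the same.
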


\begin{proof}
Let $H$ be the complement of $G$.  That is, $V(H) = V(G)$ and $E(H) = \binom{[r+1]}{2} \setminus E(G)$.  Then, $\delta(H) = r - \Delta(G) \geq 1$ and if $x$ and $y$ are any pair of vertices with $\deg_H(x) = \deg_H(y) = 1$, then $\deg_G(x) = \deg_G(y) = r-1$.  By assumption, $x$ and $y$ are adjacent in $G$ and hence are not adjacent in $H$.  Thus, by Lemma \ref{lem:path-cmpts},
\[
|E(G)| = \binom{r+1}{2} - |E(H)| \leq \binom{r+1}{2} - \left\lceil \frac{2(r+1)}{3} \right \rceil,
\]
as required.
\end{proof}

The following proposition is key to the proofs that follow. We use Corollary \ref{cor:dual-gr} to give estimates on the number hyperedges in the $K_{r+1}^r$-decomposition of a friendship $r$-hypergraph that contains a fixed set of $r-1$ vertices.

\begin{prop}\label{prop:shadow}
Let $r \geq 3$ and let $\mathcal{H}'$ be the $K_{r+1}^r$-decomposition of a friendship $r$-hypergraph.  For each $q  \in E(\mathcal{H}')$ and $z \notin q$,
\[
|\{q' \in \mathcal{H}' \mid z \in q' \text{ and } |q' \cap q| = r-1 \}| \leq \left\lfloor \frac{(r+1)(3r-4)}{6} \right\rfloor.
\]
\end{prop}

\begin{proof}
Fix $q \in E(\mathcal{H}')$ and define a graph $G$ with vertex set $q$ as follows.  Let $\{x, y\}$ be an edge of $G$ if and only if there exists $q' \in E(\mathcal{H}')$ with $\{z\} \cup (q \setminus \{x, y\}) \subseteq q'$.  That is, the complements of edges in $G$ are precisely those sets of size $r-1$ in $q$ that are contained in some hyperedge of $\mathcal{H}'$ with the vertex $z$. Since, for any such set $A$ of size $r-1$, there is at most one hyperedge in $\mathcal{H}'$ containing $\{z\} \cup A$, then,
\[|E(G)|=
|\{q' \in \mathcal{H}' \mid z \in q' \text{ and } |q' \cap q| = r-1 \}|.
\]

Our goal is to show that Corollary \ref{cor:dual-gr} can be applied to the graph $G$. First we show that $\Delta(G) \leq r-1$. Suppose for a contradiction that there exists some vertex $x_1 \in q$ such that $\deg(x_1) = r$.  Then, for every set $A\subseteq q$ of size $r-1$ that does not contain $x_1$, there is a hyperedge in $\mathcal{H}'$ containing $\{z\} \cup A$.  That is, $z$ is a friend of $q\backslash \{x_1\}$.  But as $q \in E(\mathcal{H}')$, $x_1$ is also a friend of $q \setminus \{x_1\}$, contradicting the uniqueness of the friend $z$.  Thus, $\Delta(G) \leq r-1$.

Now we show that any pair of vertices in $G$ with degree $r-1$ are adjacent. Suppose that there exist $x_1, x_2 \in q$ such that $\deg(x_1) = \deg(x_2) = r-1$ and $x_1 x_2 \notin E(G)$.  Then, $x_1$ and $x_2$ are each adjacent in $G$ to every vertex $x_3, x_4, \ldots, x_{r+1}$.  Hence, for every set $A \subseteq \{x_3, x_4, \ldots, x_{r+1}\}$ with $|A| = r-2$, there is a hyperedge of $\mathcal{H}'$ containing $\{z, x_1\} \cup A$ and a hyperedge of $\mathcal{H}'$ containing $\{z, x_2\} \cup A$.  Since $q \in \mathcal{H}'$, then both $x_1$ and $x_2$ are friends of the $r$-set $\{z, x_3, x_4, \ldots, x_{r+1}\}$, contradicting the uniqueness of friends.  Thus, $x_1 x_2 \in E(G)$.

Thus, Corollary \ref{cor:dual-gr} applies and it follows that $|E(G)| \leq \lfloor (r+1)(3r-4)/6 \rfloor$.
\end{proof}

Proposition \ref{prop:shadow}, together with double counting arguments and Jensen's inequality for convex functions are now used to give an upper bound on the number of hyperedges in a friendship $r$-hypergraph. Theorem \ref{thm:main2} will follow from Theorem \ref{thm:ub} with one observation to be made.

\begin{thm}\label{thm:ub}
Let $r \geq 3$ and let $\mathcal{H}'$ be the $K_{r+1}^r$-decomposition of a friendship $r$-hypergraph on $n$ vertices.  Then,
\[
|E(\mathcal{H}')| \leq \frac{2}{r(r+1)^2}\left\lfloor \frac{(r+1)(3r-4)}{6}\right\rfloor \binom{n}{r} + \frac{4}{r^2(r+1)^2}\left\lceil \frac{2(r+1)}{3} \right\rceil \binom{n}{r-1}.
\]
\end{thm}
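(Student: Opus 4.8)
The plan is to bound $|E(\mathcal{H}')|$ by feeding Proposition \ref{prop:shadow} into a convexity estimate via double counting. Write $m = |E(\mathcal{H}')|$, and for each $(r-1)$-set $T$ of vertices let $d(T)$ be the number of edges of $\mathcal{H}'$ containing $T$. Counting incidences between edges of $\mathcal{H}'$ and their $(r-1)$-subsets gives $\sum_T d(T) = \binom{r+1}{2}\, m$, and Lemma \ref{lem:decomp}(i) guarantees $d(T) \geq 1$ for every $T$, so the sum runs over all $\binom{n}{r-1}$ sets $T$.

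The first structural fact I would establish is that two distinct edges of $\mathcal{H}'$ meet in at most $r-1$ vertices: if they shared an $r$-set $S$, then $S$ would be a hyperedge of $\mathcal{H}$ lying in two distinct copies of $K_{r+1}^r$, contradicting Lemma \ref{lem:decomp}(ii). Consequently any two edges containing a common $(r-1)$-set $T$ intersect in exactly $T$, so the number $P$ of ordered pairs of distinct edges of $\mathcal{H}'$ whose intersection has size $r-1$ satisfies $P = \sum_T d(T)\bigl(d(T)-1\bigr)$, each such pair being recorded under its unique common $(r-1)$-set.

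Next I would bound $P$ from above using Proposition \ref{prop:shadow}. Counting triples $(q,z,q')$ with $q,q' \in E(\mathcal{H}')$, $z \notin q$, $z \in q'$ and $|q \cap q'| = r-1$: each pair with $|q \cap q'| = r-1$ has exactly $|q' \setminus q| = 2$ admissible values of $z$, so the number of triples is $2P$, while summing the bound of Proposition \ref{prop:shadow} over all $m$ edges $q$ and all $n-r-1$ vertices $z \notin q$ gives $2P \le m(n-r-1)\lfloor (r+1)(3r-4)/6 \rfloor$. Combining this with the Jensen (convexity) lower bound $\sum_T d(T)^2 \ge \bigl(\sum_T d(T)\bigr)^2 / \binom{n}{r-1}$ produces a quadratic inequality in $m$ which, after dividing through by the positive quantity $\binom{r+1}{2}\, m$, becomes linear in $m$.

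The final step is algebraic simplification. Using the identity $\binom{n}{r-1}(n-r-1) = r\binom{n}{r} - 2\binom{n}{r-1}$ to introduce $\binom{n}{r}$, and the relation $\lfloor (r+1)(3r-4)/6 \rfloor + \lceil 2(r+1)/3 \rceil = \binom{r+1}{2}$ recorded in Corollary \ref{cor:dual-gr} to rewrite the resulting coefficient of $\binom{n}{r-1}$, the bound collapses exactly to the stated expression. I expect the only genuine content to be the structural claim that edges of $\mathcal{H}'$ meet in at most $r-1$ vertices, which converts the shadow bound from Proposition \ref{prop:shadow} into a bound on $\sum_T d(T)\bigl(d(T)-1\bigr)$, together with applying convexity in the correct direction; the remaining manipulations are careful but routine. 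Note also that equality in Jensen forces all $d(T)$ equal, which is precisely the ``every $(r-1)$-set lies in the same number of hyperedges'' conclusion needed afterwards for Theorem \ref{thm:main2}.
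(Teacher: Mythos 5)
Your proof is correct and is essentially the paper's own argument: both rest on Proposition \ref{prop:shadow}, Jensen's inequality applied to the degrees of $(r-1)$-sets, and a double count of pairs of edges of $\mathcal{H}'$ meeting in exactly $r-1$ vertices (your global quantity $P$ is just the paper's per-edge sums $\sum_{T \subseteq q} \deg(T)$ aggregated over $q$, with the same factor-of-$2$ resolution via the vertices $z \in q' \setminus q$), and your closing algebra reproduces the stated coefficients exactly. The only cosmetic difference is that you make explicit the fact, used implicitly in the paper's equation \eqref{eq:counting-z}, that distinct edges of $\mathcal{H}'$ intersect in at most $r-1$ vertices.
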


\begin{proof}

For any set $\{x_1, x_2, \ldots, x_{r-1}\} \subseteq V(\calH)$, let $\deg(x_1, x_2, \ldots, x_{r-1})$ be the number of hyperedges in $\mathcal{H}'$ containing $\{x_1, x_2, \ldots, x_{r-1}\}$.  Since $\mathcal{H}'$ arises from a friendship $r$-hypergraph we have,
\[
1 \leq \deg(x_1, x_2, \ldots, x_{r-1}) \leq \frac{n-r+1}{2}.
\]
By double counting we get that
\begin{equation}\label{eq:deg-size}
\sum_{x_1, \ldots, x_{r-1} \in V(\calH)} \deg(x_1, x_2, \ldots, x_{r-1}) = \binom{r+1}{r-1} |\mathcal{H}'| = \binom{r+1}{2} |\mathcal{H}'|.
\end{equation}

On the other hand, again by double counting,
\begin{align}
\sum_{q \in E(\mathcal{H}')} &\sum_{x_1, \ldots, x_{r-1} \in q} \deg(x_1, \ldots, x_{r-1}) \notag\\
	& = \sum_{x_1, \ldots, x_{r-1} \in V(\calH)} \deg(x_1, \ldots, x_{r-1})^2 \notag\\
	& \geq \frac{1}{\binom{n}{r-1}} \left(\sum_{x_1, \ldots, x_{r-1}} \deg(x_1, \ldots, x_{r-1}) \right)^2 
	&&\text{(by Jensen's inequality)} \notag\\
	& = \frac{1}{\binom{n}{r-1}}\left( \binom{r+1}{2} |\mathcal{H}'| \right)^2. &&\text{(by eq. \eqref{eq:deg-size})} \label{eq:conv-bd}
\end{align}

Fix any $q \in E(\mathcal{H}')$.  By double counting, since each hyperedge in $\mathcal{H}'$ has $r+1 = (r-1) + 2$ vertices,
\begin{align}
\sum_{\{x_1, \ldots, x_{r-1} \} \subseteq q} \deg(x_1, \ldots, x_{r-1})
	&=\sum_{\underset{|A| = r-1}{A \subseteq q}} |\{q' \mid A \subseteq q'\}| \notag\\
	& = \sum_{\underset{|A| = r-1}{A  \subseteq q}} \left(1 + |\{q' \neq q \mid A \subseteq q'\}| \right) \notag\\
	& = \binom{r+1}{r-1} + \sum_{\underset{|A| = r-1}{A  \subseteq q}}  |\{q' \neq q \mid A \subseteq q'\}|. \label{eq:sum-deg}
\end{align}

The second term in equation \eqref{eq:sum-deg} can be re-written so that Proposition \ref{prop:shadow} can be applied.  Indeed,
\begin{align}
\sum_{\underset{|A| = r-1}{A  \subseteq q}}  |\{q' \neq q \mid A \subseteq q'\}| 
	&=|\{(A, q') \mid A = q' \cap q,\  q \neq q' \in \mathcal{H}'\}| \notag\\
	&=|\{q' \in E(\mathcal{H}') \setminus \{q\} \mid |q' \cap q| = r-1\}| \notag\\
	&= \frac{1}{2}|\{(z, q') \mid z \in q' \setminus q,\ |q' \cap q| = r-1,\ q' \in E(\mathcal{H}')\}|. \label{eq:counting-z}
\end{align}

Now, for each $z \notin q$, by Proposition \ref{prop:shadow},
\begin{equation}\label{eq:use-prop9}
|\{q' \in \mathcal{H}' \mid z \in q' \text{ and } |q' \cap q| = r-1 \}| \leq \left \lfloor \frac{(r+1)(3r-4)}{6} \right\rfloor.
\end{equation}

Therefore, by equations \eqref{eq:sum-deg}, \eqref{eq:counting-z}, and \eqref{eq:use-prop9},
\begin{align}
\sum_{\{x_1, \ldots, x_{r-1} \} \subseteq q} &\deg(x_1, \ldots, x_{r-1}) \notag\\
	& \leq \frac{1}{2}\left((n-r-1)\left\lfloor \frac{(r+1)(3r-4)}{6}\right\rfloor + (r+1) \cdot r \right) \notag\\
	& = \frac{1}{2} (n-r+1)\left\lfloor \frac{(r+1)(3r-4)}{6}\right\rfloor + \left\lceil \frac{2(r+1)}{3} \right\rceil. \label{eq:deg-ub}
\end{align}

Thus, combining equations \eqref{eq:conv-bd} and \eqref{eq:deg-ub} yields
\begin{align*}
\frac{1}{\binom{n}{r-1}} &\binom{r+1}{2}^2 |E(\mathcal{H}')|^2 \\
	& \le \sum_{q \in \mathcal{H}'} \sum_{x_1, \ldots, x_{r-1} \in q} \deg(x_1, \ldots, x_{r-1})
	&&\text{(by eq. \eqref{eq:conv-bd})} \\
	&\le |E(\mathcal{H}')| \left(\frac{1}{2} (n-r+1)\left\lfloor \frac{(r+1)(3r-4)}{6}\right\rfloor + \left\lceil \frac{2(r+1)}{3} \right\rceil \right) 
	&&\text{(by eq. \eqref{eq:sum-deg})}\notag\\
\end{align*}


Rearranging the above expression gives the required bound.
\end{proof}

\begin{proof}[Proof of Theorem \ref{thm:main2}]
Due to the use of Jensen's inequality, if equality holds in Theorem \ref{thm:ub}, then there exists some fixed integer $k$ such that for all sets $\{x_1, \ldots, x_{r-1}\}\subseteq V(\calH)$, $\deg(x_1,\ldots,x_{r-1}) = k$ . Theorem \ref{thm:main2} then follows from Theorem \ref{thm:ub} and the fact that $|E(\mathcal{H})| = (r+1)|E(\mathcal{H}')|$.
\end{proof}

In the case $r = 3$, Theorem \ref{thm:ub} immediately gives the following upper bound for the size of a friendship $3$-hypergraph.

\begin{cor}\label{cor:3-ub}
Let $\mathcal{H}'$ be the $K_4^3$-decomposition of a friendship $3$-hypergraph on $n$ vertices.  Then,
\[
|E(\mathcal{H}')| \leq \frac{1}{8} \binom{n}{3} + \frac{1}{12}\binom{n}{2} = \frac{n^2(n-1)}{48}.
\]
\end{cor}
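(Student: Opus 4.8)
The plan is to specialize Theorem \ref{thm:ub} to the case $r = 3$ and simplify; no new ideas are needed beyond careful arithmetic. First I would substitute $r = 3$ into the two coefficients and the two integer-valued expressions appearing in Theorem \ref{thm:ub}. The floor term becomes $\left\lfloor \frac{(r+1)(3r-4)}{6} \right\rfloor = \left\lfloor \frac{20}{6} \right\rfloor = 3$, and the ceiling term becomes $\left\lceil \frac{2(r+1)}{3} \right\rceil = \left\lceil \frac{8}{3} \right\rceil = 3$. The leading coefficient $\frac{2}{r(r+1)^2}$ evaluates to $\frac{2}{48} = \frac{1}{24}$, so the $\binom{n}{r} = \binom{n}{3}$ term carries coefficient $\frac{1}{24}\cdot 3 = \frac{1}{8}$; similarly $\frac{4}{r^2(r+1)^2}$ evaluates to $\frac{4}{144} = \frac{1}{36}$, so the $\binom{n}{r-1} = \binom{n}{2}$ term carries coefficient $\frac{1}{36}\cdot 3 = \frac{1}{12}$. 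This yields the first equality, namely $|E(\mathcal{H}')| \le \frac{1}{8}\binom{n}{3} + \frac{1}{12}\binom{n}{2}$.

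For the closed form, I would put both binomial expressions over the common denominator $48$. Writing $\frac{1}{8}\binom{n}{3} = \frac{n(n-1)(n-2)}{48}$ and $\frac{1}{12}\binom{n}{2} = \frac{2n(n-1)}{48}$, I factor $n(n-1)$ out of the numerators to obtain $\frac{n(n-1)\bigl((n-2) + 2\bigr)}{48} = \frac{n^2(n-1)}{48}$, which is the claimed expression.

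There is no genuine obstacle here: the corollary is a direct numerical consequence of Theorem \ref{thm:ub}, with the inequality inherited verbatim and the closed form obtained by algebra. The only points requiring care are evaluating the floor and the ceiling correctly at $r = 3$ (both of which happen to equal $3$) and carrying out the common-denominator simplification without arithmetic slips.
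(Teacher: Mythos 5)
Your proposal is correct and matches the paper exactly: the paper also obtains Corollary \ref{cor:3-ub} by substituting $r=3$ into Theorem \ref{thm:ub}, and your evaluation of the floor and ceiling terms (both equal to $3$), the coefficients $\frac{1}{8}$ and $\frac{1}{12}$, and the simplification to $\frac{n^2(n-1)}{48}$ are all accurate.
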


Peter Allen has pointed out that the hypergraph removal lemma can be used to provide an upper bound that is asymptotically much better than Theorem \ref{thm:main2}.  The hypergraph removal lemma, proved independently by Gowers \cite{wtG06, wtG07} and Nagle, R\"{o}dl, Schacht and Skokan \cite{NRS06, RS04} says that for any $r$-uniform hypergraph $\mathcal{G}$ on $g$ vertices and any $\varepsilon > 0$, there is a $\delta > 0$ so that any $r$ uniform hypergraph on $n$ vertices that contains at most $\delta n^g$ copies of $\mathcal{G}$ can be made $\mathcal{G}$-free by the removal of at most $\varepsilon n^r$ hyperedges.

As was noted in Lemma \ref{lem:decomp}, if $\mathcal{H}$ is a friendship $r$-hypergraph, then every hyperedge is contained in exactly one copy of $K_{r+1}^r$ and so, in particular, the number of copies of $K_{r+1}^r$ in $\mathcal{H}$ is at most $\frac{1}{r+1}\binom{n}{r} = o(n^{r+1})$.  Thus, the hypergraph removal lemma immediately implies that for every $\varepsilon > 0$, there is an $n_0$ so that for all $n \geq n_0$, if $\mathcal{H}$ is a friendship $r$-hypergraph on $n$ vertices, then $|\mathcal{H}| < \varepsilon n^r$.

\subsection{Possible examples}\label{sec:ex}

In this section, we describe a possible generalisation of the construction given by J{\o}rgensen and Sillasen \cite{JS14} of non-universal friendship $r$-hypergraphs.  The construction depends on the existence of certain Steiner systems, which is unknown except in the smallest cases.

Let $r \geq 3$ and suppose that there is a $S(r+1,r+2,2r+4)$ Steiner system, $\mathcal{S}$.  One can form a non-universal friendship $r$-hypergraph from $\mathcal{S}$ as follows.  Fix three vertices $a,b,c \in V(\mathcal{S})$ and let $\calH'$ be the hypergraph on $V(\mathcal{S}) \backslash \{a,b,c\}$ whose hyperedges are given by $$\{B \backslash \{a\} \mid B \in E(S), a \in B, \{b,c\} \notin B\}.$$  J{\o}rgensen and Sillasen \cite{JS14} examined this construction in the case $r = 4$, where it is known that there is a $S(5, 6, 12)$ Steiner system.

For arbitrary $r$, a proof that $\calH'$ is the $K_{r+1}^r$ decomposition of a friendship $r$-hypergraph $\calH$ follows by a natural generalisation of the proof  given by J{\o}rgensen and Sillasen \cite[Theorem 5]{JS14} in the case when $r=4$. Furthermore, an inclusion-exclusion argument shows that
\begin{align*}
|E(\calH')|
	&={2r+3 \choose r} \cdot \frac{1}{r+1}- 2{2r+2 \choose r-1} \cdot \frac{1}{r}+{2r+1 \choose r-2} \cdot \frac{1}{r-1}\\
	&={2r+1 \choose r} \cdot \frac{1}{r+3}.
\end{align*}
 from which one can verify that $\calH$ is universal if and only if $r=2$. The case $r=4$ is also of particular interest, since in this case Theorem \ref{thm:main2} implies that $|E(\calH')| \leq 18$, so that $|E(\calH)| \leq 18\cdot 5 = 90$, which is exactly the number of hyperedges in the example constructed in \cite{JS14} using a $S(5,6,12)$ system. Thus there is a non-universal example which shows the upper bound given by Theorem \ref{thm:main2} is tight. 

In general, it follows from Theorem \ref{thm:main2} that, for $r$ tending to infinity, a friendship $r$-hypergraph on $2r+1$ vertices can have at most 
\[
{2r+1 \choose r} \frac{1}{r+3} \left( 1 + O\left(\frac{1}{r}\right) \right)
\]
 hyperedges.  Thus, if the appropriate Steiner system exists, such an $(r+1)$-uniform hypergraph is asymptotically as large as possible. 

\section{Open questions}\label{sec:open-ques}

As both the lower bound given in Theorem \ref{thm:main} and the upper bound given in Theorem \ref{thm:main2} impose strong conditions on those friendship hypergraphs that attain either of these bounds, there are natural open questions pertaining to the bounds for friendship hypergraphs that are either not universal, or do not have every set of $r-1$ vertices contained in the same number of hyperedges.

By Theorem \ref{thm:main}, every non-universal friendship $r$-hypergraph has at least $r+1$ more hyperedges than a universal friendship hypergraph on the same number of vertices. We wonder if this could be improved:
\begin{ques}  
 How much larger is any non-universal friendship $r$-hypergraph than a universal hypergraph on the same number of vertices? 
\end{ques}

If $\mathcal{H}$ is a friendship $r$-hypergraph in which not all sets of $r-1$ vertices are contained in the same number of hyperedges, then equation \eqref{eq:deg-size} can be strengthened, leading to an incrementally better upper bound. We are interested to see how much more could be achieved:
\begin{ques}
  In general, if $\mathcal{H}$ is a friendship $r$-hypergraph with not all $\deg(x_1, \ldots, x_{r-1})$ equal, how much smaller is the upper bound on $|\mathcal{H}|$ compared to that given in Theorem \ref{thm:main2} when the number of vertices is relatively small?
\end{ques}

\section*{Acknowledgement}
The first author would like to thank Peter Allen for pointing out the application of the Hypergraph Removal Lemma.

\end{document}